\documentclass[11pt,a4paper,reqno]{amsart}
\usepackage{amsthm,amsmath,amsfonts,amssymb,amsxtra,appendix,bookmark,dsfont,latexsym,bm,hyperref,delarray,color,euscript,amsgen,amsbsy,amsopn,amscd,latexsym,mathrsfs}

\makeatletter

\setlength{\voffset}{0.5truein}
 \setlength{\textheight}{8.25truein}
 \setlength{\textwidth}{6truein}
 \setlength{\hoffset}{-0.5truein}

\usepackage{hyperref}


\makeatletter
\def\@tocline#1#2#3#4#5#6#7{\relax
  \ifnum #1>\c@tocdepth 
  \else
    \par \addpenalty\@secpenalty\addvspace{#2}%
    \begingroup \hyphenpenalty\@M
    \@ifempty{#4}{%
      \@tempdima\csname r@tocindent\number#1\endcsname\relax
    }{%
      \@tempdima#4\relax
    }%
    \parindent\z@ \leftskip#3\relax \advance\leftskip\@tempdima\relax
    \rightskip\@pnumwidth plus4em \parfillskip-\@pnumwidth
    #5\leavevmode\hskip-\@tempdima
      \ifcase #1
       \or\or \hskip 1em \or \hskip 2em \else \hskip 3em \fi%
      #6\nobreak\relax
      \dotfill
      \hbox to\@pnumwidth{\@tocpagenum{#7}}
    \par
    \nobreak
    \endgroup
  \fi}
\makeatother



\newtheorem{theorem}{Theorem}[section]
\newtheorem{lemma}[theorem]{Lemma}

\newtheorem{definition}[theorem]{Definition}
\newtheorem{properties}[theorem]{Properties}

\theoremstyle{remark}


\newcommand\R{{\ensuremath {\mathbb R} }}
\newcommand\C{{\ensuremath {\mathbb C} }}
\newcommand\N{{\ensuremath {\mathbb N} }}

\newcommand\1{{\ensuremath {\mathds 1} }}

\renewcommand\phi{\varphi}

\newcommand{\wto}{\rightharpoonup}
\newcommand{\cS}{\mathcal{S}}
\newcommand{\cP}{\mathcal{P}}

\newcommand{\cI}{\mathcal{I}}

\renewcommand{\epsilon}{\varepsilon}

\DeclareMathOperator{\tr}{{\rm Tr}}

\renewcommand{\geq}{\geqslant}
\renewcommand{\leq}{\leqslant}

\numberwithin{equation}{section}

\begin{document}

\title{On two properties of the Fisher information}

\author[N. Rougerie]{Nicolas Rougerie}
\address{Universit\'e Grenoble-Alpes \& CNRS,  LPMMC (UMR 5493), B.P. 166, F-38042 Grenoble, France}
\email{nicolas.rougerie@grenoble.cnrs.fr}

\date{August, 2020}

\begin{abstract}
Alternative proofs for the superadditivity and the affinity (in the large system limit) of the usual and some fractional Fisher informations of a probability density of many variables are provided. They are consequences of the fact that such informations can be interpreted as quantum kinetic energies. 
\end{abstract}

\maketitle

\tableofcontents

\section{Introduction}

The Fisher information of a symmetric probability measure of $N$ variables (see Definition \ref{def:Fisher} below) is known to be: 

\medskip

\noindent\textbf{A}. Superadditive: the information of the full measure is not smaller than the sum of the informations of marginals with $n$ and $N-n$ variables. 
 
\medskip

\noindent\textbf{B}. Affine linear in the large $N$ limit: for probability measures that have a limit $N\to \infty$, the natural limiting information (mean, or level-3, information) is an affine functional.
 
\medskip

\textbf{A} has been proved first in~\cite[Theorem~3]{Carlen-91b} and then, by another method, in~\cite[Lemma~3.7]{HauMis-14}. \textbf{B} is deduced in~\cite[Proposition~3]{Kiessling-12} from the corresponding property for the mean entropy, originating in~\cite{RobRue-67}. In~\cite[Section~5.3]{HauMis-14} a different proof is provided, based on a general abstract linearity lemma. The analogues for some fractional variants of Fisher's information are obtained in~\cite{Salem-19,Salem-19b}. 

This note provides alternative proofs of these properties. If the Fisher information of a probability density $\mu_N \in \cP_{\rm sym} (\R^{dN})$ is interpreted as a quantum kinetic energy for the quantum state $\vert \Psi_N \rangle \langle \Psi_N \vert$, orthogonal projector on the ``quantum wave-function'' $\Psi_N = \sqrt{\mu_N} \in L^2 _{\rm sym} (\R^{dN})$, both properties become quite natural.  Roughly speaking \textbf{A} is a consequence of the convexity of the kinetic energy as a function of $\vert \Psi_N\vert^2$ and \textbf{B} follows from its affinity as a function of $\vert \Psi_N \rangle \langle \Psi_N \vert$.

As for the motivations behind proving \textbf{A} and \textbf{B}, they mostly come from the study of mean-field limits of large systems of statistical mechanics, classical and quantum. Indeed, they do not seem to be of use in the theory of sufficient statistics, where the Fisher information originates. We however note that any quantity interpreted as an ``information'' should certainly satisfy \textbf{A}. \textbf{B} has been known to hold for the entropy for a long time~\cite{RobRue-67}, and it seems natural to ask its equivalent for the Fisher information.

Both properties are crucial to Kiessling's approach~\cite{Kiessling-12} of the mean-field limit of bosonic ground states (see~\cite[Appendix~A]{Rougerie-LMU,Rougerie-spartacus} for review), which involves interpreting the quantum kinetic energy as a classical Fisher information. In fact we exploit here the reverse of Kiessling's point of view.

In~\cite{FouHauMis-14}, Fisher information bounds are used to control the mean-field limit of a classical statistical mechanics system with stochastic diffusions (see~\cite{Hauray-hdr} for review). The method is general and has been adapted to other models, e.g. in~\cite{Salem-19,Salem-19b}. Briefly, the entropy production\footnote{I follow the physicists' convention that the entropy is produced, not dissipated.} along the flow is controlled first. The Fisher information is naturally linked to variations of the entropy\footnote{The Fisher information is the derivative of the entropy along the heat flow.}, and one then deduces a control of the former quantity. This has several important applications, one of which relies on \textbf{A} and \textbf{B} above. Briefly, \textbf{A} allows to pass to the large $N$ limit and control the mean information of limiting objects. Then \textbf{B} implies that the de Finetti-Hewitt-Savage mixing measure of the limit is concentrated on probability measures with finite Fisher information. Uniqueness theorems for the mean-field equation in the latter class can then be put to good use.  

\medskip

\noindent \textbf{Acknowledgments.} Thanks to Samir Salem, conversations with whom motivated the write-up of this note, and to Mathieu Lewin for discussions on related topics a while ago. Financial support was provided by the European Research Council (ERC) under the European Union's Horizon 2020 Research and Innovation Programme (Grant agreement CORFRONMAT No 758620).

\section{Definitions and Results}

We are concerned with classical mechanics states, symmetric probability measures $\mu_N \in \cP_{\rm sym} (\R^{dN})$ for the distribution of $N$ indistinguishable particles living say in $\R^d$. We will freely identify measures and their densities with respect to Lebesgue measure on $\R^{dN}$. Symmetric here means 
\begin{equation}\label{eq:symmetric}
\mu_N (x_1,\ldots,x_N) = \mu_N (x_{\sigma(1)}, \ldots, x_{\sigma(N)}) 
\end{equation}
for almost all $X_N = (x_1,\ldots,x_N) \in \R^{dN}$ and any permutation $\sigma$ of the $N$ indices. In the applications we have in mind, $X_N$ can be a collection of spatial coordinates (this is the case for bosonic mean-field limits as considered in~\cite{Kiessling-12}) \emph{or} a collection of velocity variables (this is the case for the applications in kinetic theory~\cite{HauMis-14,Salem-19,Salem-19b}). 

\newpage

The quantities of our interest are as given in the 

\begin{definition}[\textbf{Fisher informations}]\label{def:Fisher}\mbox{}\\
For $\mu_N \in \cP_{\rm sym} (\R^{dN})$ we define 

\medskip 

\noindent$\bullet$ the Fisher information 
 \begin{equation}\label{eq:Fisher minus}
\cI_1 [\mu_N] = \left\langle \sqrt{\mu_N} \Big| \sum_{j=1} ^N - \Delta_{x_j} \Big| \sqrt{\mu_N} \right\rangle_{L^2 (\R^{dN})}.
 \end{equation}

 \medskip 

\noindent$\bullet$ the fractional Fisher information of order $0<s<1$ 
\begin{equation}\label{eq:Fisher frac minus}
\cI_s [\mu_N] = \left\langle \sqrt{\mu_N}  \Big| \sum_{j=1} ^N \left(- \Delta_{x_j}\right)^{s}  \Big| \sqrt{\mu_N} \right\rangle_{L^2 (\R^{dN})} .
\end{equation}  
\end{definition}

\begin{proof}[Remarks]\mbox{}\\
\noindent\textbf{1.} The following equivalent definitions are well-known. For the usual Fisher information (the first one is maybe the most commonly used) we have
\begin{align}\label{eq:Fisher}
\cI_1 [\mu_N] &= \frac{1}{4} \int_{\R^{dN}} |\nabla \log \mu_N| ^2 \mu_N= \frac{1}{4}  \int_{\R^{dN}} \frac{\left|\nabla \mu_N\right| ^2}{\mu_N}=\int_{\R^{dN}} |\nabla \sqrt{\mu_N}| ^2 \nonumber \\
 &= \sum_{j=1} ^N \int_{\R^{dN}} |k_j|^{2} \left|\widehat{\sqrt{\mu_N}} (k) \right|^2 dk
 \end{align}
 and for the fractional Fisher information:
\begin{align}\label{eq:Fisher frac}
\cI_s [\mu_N] &= \sum_{j=1} ^N \int_{\R^{dN}} |k_j|^{2s} \left|\widehat{\sqrt{\mu_N}} (k) \right|^2 dk \nonumber \\
&= C_{d,s} N \int_{\R^{d(N+1)}} \frac{\left| \sqrt{\mu_N (x,x_1,\ldots,x_{N-1}) } - \sqrt{\mu_N (y,x_1\ldots,x_{N-1}) } \right| ^2}{|x-y| ^{d+s}} dxdy dx_1 \ldots dx_{N-1}.
\end{align}
Here hat-bearing functions stand for Fourier transforms and $C_{d,s}$ is a constant only depending on $d$ and $s$. That these various definitions are equivalent either follows from straightforward calculations or is proved in standard textbooks, such as~\cite{LieLos-01}. The precise value of the constant $C_{d,s}$ is of no concern to this note but can be found in~\cite{LieLos-01,Salem-19,Salem-19b}.

\medskip 
 
\noindent\textbf{2.} It follows from results of~\cite{BouBreMir-01,MasNag-78,BouBreMir-02,MazSha-02} that 
\begin{equation}\label{eq:Bourgain}
 \cI_1 [\mu_N] = C_d \lim_{s\uparrow 1} \,(1-s) C_{d,s} ^{-1} \cI_s [\mu_N] 
\end{equation}
with
$$ C_d = \left( \int_{S^{d-1}} \cos \theta \, d\sigma \right) ^{-1}.$$
Here $S^{d-1}$ is the euclidean sphere equipped with its Lebesgue measure $d\sigma$ and $\theta=\theta (\sigma)$ represents the angle of $\sigma$ with respect to the vertical axis. This implies that $\cI_1 \equiv \cI$ is a natural limit case of $\cI_s$ for $s\to 1$. 

\medskip 

\noindent\textbf{3.} Other types of fractional Fisher informations are discussed in~\cite{Toscani-15,Toscani-16,Toscani-18}, in connection with statistics, and~\cite{Salem-19}, in connection with the fractional heat flow. 

\medskip 

\noindent\textbf{4.} In~\cite{Salem-19b} the fractional Fisher information is defined with an extra ``cut-off'' 
$$ \cI_{s,\gamma} [\mu_N] = \sum_{j=1} ^N \left\langle \sqrt{\mu_N} | \chi(x_j) (-\Delta_{x_j})^s \chi(x_j) | \sqrt{\mu_N}  \right\rangle_{L^2} $$
with $\chi (x) = (1 + |x|^2) ^{2\gamma} $, $\gamma <0$. This does not significantly change the structure of the object, nor the proofs of the results below. Indeed the "kinetic energy" 
$$ \langle u \vert \chi  (-\Delta)^s \chi \vert u \rangle$$
still enjoys the properties we need (see Properties \ref{pro:kinetic}), of which convexity as a function of $\vert u \vert^2$ and affinity as a function of the orthogonal projector $\vert u \rangle \langle u \vert$ are the most crucial. We leave the adaptations to the reader.

%
%
%
\end{proof}

Define now, for any integer $n\leq N$, the $n$-th marginal/reduced density of a measure $\mu_N \in \cP_{\rm sym} (\R^{dN})$ as the probability measure on $\R^{dn}$ with density
\begin{equation}\label{eq:marginals}
\mu_N^{(n)} (x_1,\ldots,x_n) := \int_{\R^{d(N-n)}} \mu_N (x_1,\ldots,x_n,y_{n+1},\ldots,y_N) dy_{n+1} \ldots dy_N. 
\end{equation}
The first result we provide an alternative proof for is the superadditivity of the functionals from Definition~\ref{def:Fisher}:

\begin{theorem}[\textbf{Superadditivity of Fisher informations}]\label{thm:superadd}\mbox{}\\
Let $n < N$  be two integers and $\mu_N \in \cP_{\rm sym} (\R^{dN})$. We have that 
\begin{equation}\label{eq:superadd}
\cI_s [\mu_N] \geq \cI_s \left[\mu_N ^{(n)}\right] + \cI_s \left[\mu_N ^{(N-n)}\right] 
\end{equation}
for any $0<s\leq 1$.
\end{theorem}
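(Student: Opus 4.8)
The plan is to use the reading of $\cI_s$ as a sum of one-particle quantum kinetic energies and to show that forming a marginal can only lower each such energy. Setting $\Psi_N=\sqrt{\mu_N}$ and splitting the operator along the two blocks of variables,
\[
\sum_{j=1}^N(-\Delta_{x_j})^s=\sum_{j=1}^n(-\Delta_{x_j})^s+\sum_{j=n+1}^N(-\Delta_{x_j})^s,
\]
one gets $\cI_s[\mu_N]=A+B$, where $A$ and $B$ are the expectations of the first and second sums in the state $\Psi_N$. It then suffices to prove the two bounds $A\ge\cI_s[\mu_N^{(n)}]$ and $B\ge\cI_s[\mu_N^{(N-n)}]$; these are symmetric to one another, so I would concentrate on the first.

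Everything reduces to a one-variable statement: if $\nu(x)=\int\phi(x,w)\,dw$ is obtained from a nonnegative density $\phi$ by integrating out the variables $w$, then the single-coordinate kinetic energy of $\sqrt\nu$ does not exceed that of $\sqrt\phi$. The functional $\phi\mapsto K[\phi]:=\langle\sqrt\phi,(-\Delta_x)^s\sqrt\phi\rangle$ is convex in $\phi$: for $s=1$ this follows from \eqref{eq:Fisher} and the joint convexity of the perspective map $(\rho,p)\mapsto|p|^2/\rho$, while for $0<s<1$ it follows from the double-integral form as in \eqref{eq:Fisher frac}, since $|\sqrt{a}-\sqrt{b}|^2=a+b-2\sqrt{ab}$ is jointly convex in $(a,b)$ (the geometric mean being concave) and the kernel is nonnegative. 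Moreover $K$ is positively $1$-homogeneous because the form is quadratic in $\sqrt\phi$. Writing $\nu$ as a superposition of the slices $\phi(\cdot,w)$ and normalizing each by its mass $\int\phi(x,w)\,dx$, Jensen's inequality combined with the homogeneity then gives the key inequality $K[\nu]\le\int K[\phi(\cdot,w)]\,dw$.

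I would then assemble the bound on $A$ coordinate by coordinate. For the $j=1$ term, Fubini lets me integrate $x_1$ first; applying the one-variable inequality on each slice with only $y_{n+1},\ldots,y_N$ integrated out, and then integrating over the spectator variables $x_2,\ldots,x_n$, yields $\langle\sqrt{\mu_N^{(n)}},(-\Delta_{x_1})^s\sqrt{\mu_N^{(n)}}\rangle\le\langle\sqrt{\mu_N},(-\Delta_{x_1})^s\sqrt{\mu_N}\rangle$. Symmetry \eqref{eq:symmetric} gives the same for $j=2,\ldots,n$, and summing over $j$ produces $A\ge\cI_s[\mu_N^{(n)}]$. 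The identical argument applied to the second block, together with the fact that by symmetry the marginal over any $N-n$ of the variables carries the same information, gives $B\ge\cI_s[\mu_N^{(N-n)}]$; adding the two bounds closes the proof.

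The substantive content sits entirely in the one-variable inequality $K[\nu]\le\int K[\phi(\cdot,w)]\,dw$, that is, in the convexity and homogeneity of the one-body kinetic energy as a function of the density (this is the role of Properties~\ref{pro:kinetic}); once that is in hand, the remainder is bookkeeping with Fubini and the symmetry of $\mu_N$. The one point requiring care is the Jensen/homogeneity combination for the non-normalized slices $\phi(\cdot,w)$, but the identity $K[\lambda\phi]=\lambda K[\phi]$ reduces it cleanly to the standard convexity estimate for probability densities.
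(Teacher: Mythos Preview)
Your proof is correct, but it takes a different route from the paper's. Both arguments start from the same splitting $\cI_s[\mu_N]=A+B$ and both ultimately rest on the convexity of $\rho\mapsto\langle\sqrt\rho\,|\,h\,|\,\sqrt\rho\rangle$ (Item~2 of Properties~\ref{pro:kinetic}). The difference lies in how the marginal is produced. The paper forms the \emph{quantum} reduced density matrix $\Gamma_N^{(n)}=\tr_{n+1\to N}|\Psi_N\rangle\langle\Psi_N|$, spectrally decomposes it as $\sum_j\lambda_j|u_j\rangle\langle u_j|$, invokes positivity preserving (Item~1) to pass from the possibly complex eigenfunctions $u_j$ to $|u_j|$, and then applies convexity to the mixture $\rho_n=\sum_j\lambda_j|u_j|^2=\mu_N^{(n)}$. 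You instead stay entirely on the classical side: you view $\mu_N^{(n)}$ as the mixture over $w=(y_{n+1},\ldots,y_N)$ of the conditional slices $\mu_N(\cdot,w)$, and apply convexity together with the $1$-homogeneity $K[\lambda\phi]=\lambda K[\phi]$ directly to that decomposition. Your argument is thus more elementary---no spectral theorem, no reduced density matrices, and Item~1 of Properties~\ref{pro:kinetic} is never used since no complex wave-functions enter---while the paper's argument is the one that genuinely exploits the quantum-mechanical viewpoint advertised in the introduction (it is the Hoffmann-Ostenhof$^2$ mechanism). A minor stylistic point: your coordinate-by-coordinate assembly works, but since $\phi\mapsto\langle\sqrt\phi\,|\,H_n\,|\,\sqrt\phi\rangle$ is itself convex and $1$-homogeneous (as a sum of such functionals), you could apply your slice inequality once to $H_n$ as a whole and skip the loop over $j$.
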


\begin{proof}[Remarks]\mbox{}\\
 \noindent \textbf{1.} For $s=1$, Carlen proved this via a Minkowski-like inequality. Hauray-Mischler use a dual formulation of the Fisher information (not mentioned in the remark after Definition~\ref{def:Fisher}) to recover the result. For $s<1$ the result is obtained in~\cite{Salem-19b} using the last formulation in~\eqref{eq:Fisher frac}. As per~\eqref{eq:Bourgain}, this also implies the result for $s=1$. 
 
 \medskip 
 
 \noindent \textbf{2.} The short proof we provide uses standard tools of quantum mechanics: reduced density matrices and convexity of $\mu_N \mapsto \cI_s [\mu_N]$.
 
 \medskip

 \noindent\textbf{3.} A useful consequence is that, if $N$ is an integer times $n$  
\begin{equation}\label{eq:superadd bis}
 \frac{1}{N} \cI_s [\mu_N] \geq \frac{1}{n} \cI_s [\mu_N^{(n)}]. 
\end{equation}
\end{proof}

In statistical mechanics one is often interested in the limit of large particle numbers, $N\to \infty$ , maybe with other parameters of the model scaled appropriately. Then our classical states turn into symmetric probability measures over infinite sequences, $\mu \in \cP_{ \rm sym} (\R^{d\N})$. One can also be interested in the limits $N\to \infty$ with fixed $n$ of the marginals~\eqref{eq:marginals}. It can then be useful to have a notion of ``mean Fisher information'' (sometimes also refered to as level-3 information):

\begin{definition}[\textbf{Mean Fisher information}]\label{def:mean Fisher}\mbox{}\\
Let $\mu \in \cP_{\rm sym} (\R^{d\N})$ be a symmetric probability measure over sequences in $\R^d$. Equivalently\footnote{By a theorem of Kolmogorov.}, let $(\mu ^{(n)})_n$ be a sequence of symmetric probability measures over $\R^{dn}$ satisfying the consistency condition
$$ \left( \mu ^{(n+1)} \right) ^{(n)} = \mu ^{(n)}.$$
For $0<s\leq 1$ the mean (fractional) Fisher information of $\mu$ is 
\begin{equation}\label{eq:mean Fisher}
\cI_s [\mu] := \limsup_{n\to \infty} \frac{1}{n} \cI_s [\mu ^{(n)}]. 
\end{equation}
\end{definition}

The existence of the $\limsup$ follows from~\eqref{eq:superadd bis}. It is in fact possible to see that the $\limsup$ is both a $\sup$ and a $\lim$, using Theorem~\ref{thm:affine}.

The definition is in complete analogy with that of the mean entropy, originating in~\cite{RobRue-67}. Perhaps surprisingly, this functional is affine, just as the mean entropy. We shall give an alternative proof of the 

\begin{theorem}[\textbf{The mean Fisher information is affine}]\label{thm:affine}\mbox{}\\
Let $\mu \in \cP_{\rm sym} (\R^{d\N})$ and $P\in \cP (\cP (\R^d))$ be its unique de Finetti-Hewitt-Savage measure, i.e., for all $n\geq 0$
\begin{equation}\label{eq:deF measure}
 \mu^{(n)} = \int_{\cP (\R^d)} \rho ^{\otimes n} dP (\rho).
\end{equation}
Assume that there exists a locally bounded $V:\R^d \mapsto \R$ with $V(x) \underset{|x|\to \infty}{\longrightarrow} +\infty$ such that 
\begin{equation}\label{eq:trap}
 \int_{\R^d} V \mu ^{(1)} < \infty. 
\end{equation}
Then, the mean Fisher informations from Definition~\ref{def:mean Fisher} satisfy, for all $0<s\leq 1$ 
\begin{equation}\label{eq:affine}
\cI_s [\mu] = \int_{\cP (\R^d)} \cI_s [\rho] dP (\rho)
\end{equation}
with $\cI_s [\rho]$ as in Definition~\ref{def:Fisher} with $N=1$. 
\end{theorem}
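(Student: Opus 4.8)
The plan is to read $\tfrac1N\cI_s[\mu^{(N)}]$ as a one-body quantum kinetic energy and to split the equality \eqref{eq:affine} into two inequalities: an upper bound that is a soft consequence of convexity, and a lower bound that carries the analytic content and rests on a de Finetti-type convergence of reduced density matrices. First I would record the algebraic identity underlying everything. Set $\Psi_N=\sqrt{\mu^{(N)}}\in L^2_{\rm sym}(\R^{dN})$ and let $\gamma_N^{(1)}$ be the one-body reduced density matrix of the projector $|\Psi_N\rangle\langle\Psi_N|$, i.e.\ the partial trace over the variables $x_2,\dots,x_N$, normalized so that $\tr\gamma_N^{(1)}=1$, with kernel
\[
\gamma_N^{(1)}(x,x')=\int_{\R^{d(N-1)}}\sqrt{\mu^{(N)}(x,Y)\,\mu^{(N)}(x',Y)}\,dY .
\]
By the permutation symmetry of $\mu^{(N)}$ all $N$ terms in \eqref{eq:Fisher frac} coincide, whence $\tfrac1N\cI_s[\mu^{(N)}]=\langle\Psi_N|(-\Delta_{x_1})^s|\Psi_N\rangle=\tr\big[(-\Delta)^s\gamma_N^{(1)}\big]$. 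This is exactly where the affinity of the kinetic energy as a linear functional of the density matrix is used.

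For the upper bound I would invoke the convexity of $\mu_N\mapsto\cI_s[\mu_N]$, which follows from the representations \eqref{eq:Fisher}--\eqref{eq:Fisher frac}: the integrand $|\sqrt a-\sqrt b|^2=a+b-2\sqrt{ab}$ is jointly convex in $(a,b)\geq0$ (for $s=1$ use instead the joint convexity of $(a,b)\mapsto b^2/a$). Applying Jensen to \eqref{eq:deF measure} together with the exact tensorization $\cI_s[\rho^{\otimes n}]=n\,\cI_s[\rho]$ (the kinetic energy of a product state is additive) yields
\[
\cI_s[\mu^{(n)}]=\cI_s\!\Big[\int\rho^{\otimes n}\,dP\Big]\leq\int\cI_s[\rho^{\otimes n}]\,dP=n\int\cI_s[\rho]\,dP .
\]
Dividing by $n$ and taking the $\limsup$ in \eqref{eq:mean Fisher} gives $\cI_s[\mu]\leq\int\cI_s[\rho]\,dP$.

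The lower bound is the hard part. I would prove that $\gamma_N^{(1)}\wto\gamma^{(1)}:=\int_{\cP(\R^d)}|\sqrt\rho\rangle\langle\sqrt\rho|\,dP(\rho)$ and then use the weak lower semicontinuity of $\gamma\mapsto\tr[(-\Delta)^s\gamma]$ (a supremum of weakly continuous functionals, as $(-\Delta)^s\geq0$) to conclude that $\int\cI_s[\rho]\,dP=\tr[(-\Delta)^s\gamma^{(1)}]\leq\liminf_N\tr[(-\Delta)^s\gamma_N^{(1)}]\leq\cI_s[\mu]$. The diagonal of $\gamma_N^{(1)}$ is simply $\mu^{(1)}$ and converges trivially; the whole difficulty is the off-diagonal. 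Heuristically, writing $\mu^{(N)}(x,Y)=\int\rho(x)\,\rho^{\otimes(N-1)}(Y)\,dP(\rho)$, for a $Y$ typical under $\mu^{(N-1)}$ the posterior $dP(\rho)\propto\rho^{\otimes(N-1)}(Y)$ concentrates, by the law of large numbers, on the single $\rho$ generating $Y$, so that $\sqrt{\mu^{(N)}(x,Y)}\approx\sqrt{\rho(x)}\,\sqrt{\mu^{(N-1)}(Y)}$; integrating over $Y\sim\mu^{(N-1)}$ then reproduces $\gamma^{(1)}$. Rigorously this is a quantum de Finetti statement for the bosonic states $|\Psi_N\rangle\langle\Psi_N|$: the strong de Finetti theorem gives $\gamma_N^{(1)}\to\int|u\rangle\langle u|\,dQ(u)$ for some $Q$, and matching the diagonal with \eqref{eq:deF measure} together with the pointwise nonnegativity of the kernel forces $Q=(\sqrt{\cdot})_{\#}P$, i.e.\ the limit is indeed $\gamma^{(1)}$.

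I expect the identification of this off-diagonal limit --- the passage of the square root through the mixture, which is where the concentration/de Finetti mechanism genuinely enters --- to be the main obstacle; the trapping hypothesis \eqref{eq:trap} enters precisely here, furnishing the tightness of $(\mu^{(1)})$, hence of $(\gamma_N^{(1)})$, that rules out escape of mass and pins down the full trace-one limit. Combining the two bounds gives $\int\cI_s[\rho]\,dP\leq\liminf_N\tfrac1N\cI_s[\mu^{(N)}]\leq\cI_s[\mu]\leq\int\cI_s[\rho]\,dP$, so that all inequalities are equalities; in passing this shows that the $\limsup$ in \eqref{eq:mean Fisher} is a genuine limit, as announced after Definition~\ref{def:mean Fisher}.
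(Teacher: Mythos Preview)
Your overall architecture coincides with the paper's: the upper bound by convexity is identical, and for the lower bound both you and the paper pass to the quantum state $|\Psi_N\rangle\langle\Psi_N|$, invoke the strong quantum de Finetti theorem for the hierarchy of reduced density matrices, and use lower semicontinuity of $\gamma\mapsto\tr[h\gamma]$. The confinement assumption \eqref{eq:trap} is used exactly as you anticipate, to upgrade weak-$\star$ to strong trace-class convergence so that the strong quantum de Finetti theorem applies.

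The gap is in your identification step. You assert that ``matching the diagonal with \eqref{eq:deF measure} together with the pointwise nonnegativity of the kernel forces $Q=(\sqrt{\cdot})_{\#}P$'', i.e.\ that $\gamma^{(1)}=\int|\sqrt\rho\rangle\langle\sqrt\rho|\,dP(\rho)$. Matching only the diagonal of $\gamma^{(1)}$ gives merely $\int|u|^2\,dQ(u)=\mu^{(1)}$, a single linear constraint that is far from pinning down $Q$; and it is not clear how nonnegativity of the kernel of $\gamma^{(1)}$ alone rules out $Q$ charging functions $u$ with nonconstant phase (for which $|u\rangle\langle u|\neq|\sqrt{|u|^2}\rangle\langle\sqrt{|u|^2}|$). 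The paper sidesteps this in two moves. First, it keeps the whole hierarchy $(\gamma^{(k)})_k$ and tests against multiplication operators $\phi_k$ on $\R^{dk}$: since the diagonal of $\gamma^{(k)}$ equals $\mu^{(k)}$ for every $k$, one gets $\int M_{k,\phi_k}(|u|^2)\,dQ=\int M_{k,\phi_k}(\rho)\,dP$ for all monomials $M_{k,\phi_k}(\rho)=\int\phi_k\rho^{\otimes k}$, hence $(|\cdot|^2)_{\#}Q=P$ by Stone--Weierstrass. Second, and crucially, it does \emph{not} try to prove the stronger statement $Q=(\sqrt{\cdot})_{\#}P$: instead it inserts the diamagnetic (positivity-preserving) inequality $\langle u|h|u\rangle\geq\langle|u|\,|h|\,|u|\rangle$ to obtain $\cI_s[\mu]\geq\int\langle\sqrt{|u|^2}\,|h|\,\sqrt{|u|^2}\rangle\,dQ(u)=\int\cI_s[\rho]\,dP(\rho)$, which only needs $(|\cdot|^2)_{\#}Q=P$. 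In short, your plan is right, but the ``square root through the mixture'' step should be replaced by (i) matching the diagonals of \emph{all} $\gamma^{(k)}$, and (ii) the diamagnetic inequality, rather than an unproved claim on the support of $Q$.
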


\begin{proof}[Remarks]\mbox{}\\
\noindent \textbf{1.} In~\eqref{eq:trap} I demand a bit more ``confinement'' than in previous versions of the statement~\cite{Kiessling-12,HauMis-14,Salem-19b}. This is harmless in applications, for the theorem is meant to be applied to limits $N\to\infty$ of $N$-body classical states $\mu_N$. In order for a state with infinitely many particles to exist in the limit, a tightness argument of the type
$$ \int_{\R^d} V \mu_N ^{(1)} < \infty, \mbox{ independently of  } N$$
is usually needed.

\medskip

\noindent \textbf{2.} Kiessling~\cite{Kiessling-12} proved the $s=1$ case, using the better known~\cite{RobRue-67} affinity of the mean entropy 
$$ \cS [\mu] := - \limsup_{n\to \infty} \frac{1}{n} \int_{\R^{dn}} \mu ^{(n)} \log \mu^{(n)}.$$
The Fisher information is the derivative of the entropy along the heat flow. Since the latter is linear (as is differentiation), the affinity of the Fisher information follows. 

\medskip

\noindent \textbf{3.} Hauray and Mischler~\cite{HauMis-14} gave another proof for $s=1$. A first sanity check is to convince oneself that 
\begin{equation}\label{eq:ortho}
 \cI \left[ \frac{1}{2} \rho_1 ^{\otimes \infty} + \frac{1}{2} \rho_2 ^{\otimes \infty} \right] = \frac{1}{2}  \cI \left[ \rho_1 ^{\otimes \infty} \right] + \frac{1}{2} \cI\left[ \rho_2 ^{\otimes \infty} \right] 
\end{equation}
where $\rho^{\infty}$ is the measure over $\R^{d\N}$ with $n$-th marginal $\rho ^{\otimes n}$ for any $n$. The reason for this is that, if $\rho_1 \neq \rho_2$, $\rho_1 ^{\otimes N}$ becomes more and more alien (``orthogonal'') to $\rho_2 ^{\otimes N}$ for large $N$.

Very briefly, the proof of~\cite{HauMis-14} checks a more elaborate version of the ``partial affinity''~\eqref{eq:ortho}, and then applies a general abstract lemma implying full affinity.  

\medskip

\noindent \textbf{4.} The same strategy is applied to the $s<1$ case in~\cite{Salem-19b}. The abstract lemma applies mutatis mutandis, but the argument giving the partial affinity is different. The sanity check~\eqref{eq:ortho} can also be found in~\cite[Appendix~A]{Rougerie-LMU,Rougerie-spartacus}.  

\medskip

\noindent \textbf{5.} Salem proves in~\cite{Salem-19} that the variant mean fractional information\footnote{This is the derivative of the entropy along the fractional heat flow.} based on 
$$ \widetilde{\cI}_s [\mu_N] = N \int_{\R^{d(N+1)}} \frac{\Phi\left(\sqrt{\mu_N (x,x_1,\ldots,x_{N-1})}, \sqrt{\mu_N (y,x_1\ldots,x_{N-1}) } \right)}{|x-y| ^{d+s}} dxdy dx_1 \ldots dx_{N-1}$$
with 
$$ \Phi (x,y) = (x-y) (\log x - \log y)$$
enjoys similar properties as that we defined, in particular affinity. See~\cite[Remark~3.4]{Salem-19} for more comments on the relation between $\cI_s$ and $\widetilde{\cI}_s$ and their respective uses. In particular, for the applications of~\cite{Salem-19}, results on $\cI_s$ could serve as alternatives to those on $\widetilde{\cI}_s$. 
\end{proof}

\section{Proofs}

\subsection{Preliminaries} 

Our point of view in this note is to think quantum mechanically, that is, in terms of $L^2$ functions and operators acting on them, rather than in terms of probability measures. Pick $\mu_N \in \cP_{\rm sym} (\R^{dN})$. For it to have a finite Fisher information it must actually be a function. Define then 
\begin{equation}\label{eq:quantum}
\Psi_N = \sqrt{\mu_N}, \quad \Gamma_N = |\Psi_N \rangle \langle \Psi_N|   
\end{equation}
The first object above is a bosonic wave-function, namely $\Psi_N \in L^2_{\rm sym} (\R^{dN},\C)$ satisfies 
$$ 
\Psi_N (x_1,\ldots,x_N) = \Psi_N (x_{\sigma(1)}, \ldots, x_{\sigma(N)}) 
$$
in analogy with~\eqref{eq:symmetric}. The second object in~\eqref{eq:quantum} is the $L^2$-orthogonal projector on the complex linear span of $\Psi_N$. It is a bosonic state, i.e. a positive trace-class operator with trace $1$, acting on $L^2_{\rm sym} (\R^{dN},\C)$.  Note that we do not use the usual quantization of classical mechanics: we simply use that a classical state of position \emph{or} velocity variables can be directly embedded in a quantum formalism. 

We shall write Fisher informations as quantum kinetic energies of $\Psi_N$ or $\Gamma_N$ 
\begin{equation}\label{eq:quantum Fisher}
\cI_s [\mu_N] = \left\langle \Psi_N | H_N | \Psi_N \right\rangle_{L^2} = \tr\left( H_N \Gamma_N \right)  
\end{equation}
with 
$$ H_N = \sum_{j=1} ^N h_{x_j}, \quad h=(-\Delta) ^s$$
and $h_{x_j}$ acting on the variable $x_j$. We do not emphasize the dependence on $s$, for our proofs shall be based solely on the following

\begin{properties}[\textbf{Quantum kinetic energies}]\label{pro:kinetic}\mbox{}\\
The kinetic energy 
$$ L^2 (\R^d,\C) \ni u \mapsto \langle u | h | u \rangle \in \R^+ \cup \{+\infty\}$$
with $h$ as above is
\begin{enumerate}
 \item Positivity preserving
$$ \langle u | h | u \rangle \geq \langle |u|\, | h | |u| \rangle$$
\item Convex as a function of $|u|^2$: 
$$ L^1 (\R^d,\R^+) \ni \rho \mapsto \langle \sqrt{\rho} | h | \sqrt{\rho} \rangle \in \R^+ \cup \{+\infty\}$$
is convex.
\item With locally compact resolvent. For a locally bounded $V:\R^d \mapsto \R$ with 
$$V(x) \underset{|x|\to \infty}{\longrightarrow} +\infty$$
the operator $h + V$ has compact resolvent, i.e. $(h+V + c) ^{-1}$ is compact as an operator on $L^2$, where $c$ is a constant sufficiently large for the inverse to make sense. 
\end{enumerate}
\end{properties}

\begin{proof}[Remarks]\mbox{}
The first property for $s=1$ is just the straightforward (at least for smooth functions $u= \vert u \vert e^{i\varphi}$) identity 
$$ \vert\nabla u \vert ^2 = \vert \nabla \vert u \vert \vert  ^2 + \vert u \vert^2  \vert \nabla  \varphi  \vert^2,$$ 
 a particular case of the diamagnetic inequality~\cite[Theorem~7.12]{LieLos-01}. For $s>1$ it follows immediately from the last definition in~\eqref{eq:Fisher frac}, the triangle inequality, and~\eqref{eq:Bourgain}. Note that ``positivity preserving'' usually means something stronger (but also true in the case at hand), namely that the heat flow associated with $h$ preserves positivity of functions. The property we require usually goes hand-in-hand with this "true" positivity-preserving property.

The second property can be found in~\cite[Theorems~7.8 and 7.13]{LieLos-01}. The third property follows from the Sobolev compact embedding in $L^2$.
\end{proof}

We also recall the notion of reduced density matrix, extending that of marginal~\eqref{eq:marginals}. We define the $n$-th reduced density matrix $\Gamma_N ^{(n)}$ by a partial trace 
\begin{equation}\label{eq:red dens mat}
 \Gamma_N ^{(n)} := \tr_{n+1\to N} \Gamma_N.
\end{equation}
This is the operator on $L^2_{\rm sym} (\R^{dn})$ defined by the relation 
$$ 
\tr\left(A_n \Gamma_N^{(n)} \right) = \tr\left(A_n \otimes \1^{\otimes (N-n)} \Gamma_N \right)
$$
for any bounded operator $A_n$ on $L^2_{\rm sym} (\R^{dn})$. Note that $\Gamma_N$, as a trace-class (in particular, Hilbert-Schmidt) operator on $L^2$ has an integral kernel 
$$ \Gamma_N (x_1,\ldots,x_N;y_1,\ldots,y_N) = \overline{\Psi_N (y_1,\ldots,y_N)}\Psi_N (x_1,\ldots,x_N).$$
The integral kernel of $\Gamma_N ^{(n)}$ is then given, formally, as 
\begin{multline*}
 \Gamma_N ^{(n)} (x_1,\ldots,x_n;y_1,\ldots,y_n) = \\ 
 \int_{\R ^{d(N-n)}} \Gamma_N (x_1,\ldots,x_n,z_{n+1},\ldots,z_N;y_1,\ldots,y_n,z_{n+1},\ldots,z_N) dz_{n+1} \ldots dz_N.
\end{multline*}
One use of these objects is that, if $N$ is a multiple of $n$,
\begin{equation}\label{eq:red DM}
\cI_s [\mu_N] =  \tr\left( H_N \Gamma_N \right) = \frac{N}{n} \tr\left( H_n \Gamma_N^{(n)} \right)
\end{equation}
taking partial traces and using that $H_N$ is a sum of terms acting on one variable at a time.

\subsection{Superadditivity}\label{sec:superadd}

We now prove Theorem~\ref{thm:superadd}. The following considerations are very much in the spirit of the Hoffmann-Ostenhof$^2$ inequality~\cite{Hof-77,Lewin-ICMP}.

From~\eqref{eq:quantum Fisher} and~\eqref{eq:red dens mat} we have, similarly to in \eqref{eq:red DM}, 
\begin{equation}\label{eq:split}
\cI_s [\mu_N] = \tr\left( H_n \Gamma_N ^{(n)}\right) + \tr\left( H_{N-n} \Gamma_N ^{(N-n)}\right)  
\end{equation}
where $\mu_N$ and $\Gamma_N$ are related by~\eqref{eq:quantum}. Now, $\Gamma_N^{(n)}$ being a positive trace-class operator with unit trace, the spectral theorem implies the existence of an orthonormal basis $(u_j)$ of $L^2 (\R^{dn})$ such that 
$$ \Gamma_N ^{(n)} = \sum_{j} \lambda_j |u_j\rangle \langle u_j|$$
where the positive numbers $\lambda_j$ add to $1$. Thus 
$$ \tr\left( H_n \Gamma_N ^{(n)}\right) = \sum_j \lambda_j \langle u_j | H_n | u_j \rangle$$
and, using Items 1 and 2 in Properties~\ref{pro:kinetic} we get 
$$
\tr\left( H_n \Gamma_N ^{(n)}\right) \geq \sum_j \lambda_j \langle |u_j| | H_n | |u_j| \rangle\geq \left\langle \sqrt{\rho_n} | H_n | \sqrt{\rho_n} \right\rangle
$$
 with 
$$ \rho_n := \sum_{j} \lambda_j |u_j| ^2.$$
The proof is concluded by observing that 
\begin{equation}\label{eq:n density}
\rho_n = \mu_N^{(n)} 
\end{equation}
with $\mu_N^{(n)}$ the $n$-th marginal of $\mu_N$ and arguing similarly for the second term of~\eqref{eq:split}. 

To see the truth of~\eqref{eq:n density}, identify any bounded function $V_n$ of $n$ variables in $\R^d$ with the corresponding multiplication operator on $L^2 (\R^{dn})$. Then  
\begin{align*}
\int_{\R^{dn}} V_n \rho_n &= \tr\left( V_n \Gamma_N ^{(n)}\right)\\
&= \tr\left( V_n \otimes \1^{\otimes (N-n)} \Gamma_N \right)\\
&= \left\langle \Psi_N | V_n \otimes \1^{\otimes (N-n)} | \Psi_N \right\rangle\\
&= \int_{\R^{dN}} V_n (x_1,\ldots,x_n) |\Psi_N (x_1,\ldots,x_N)| ^2 dx_1,\ldots dx_N\\
&= \int_{\R^{dn}} V_n \mu_N ^{(n)}.
\end{align*}
This completes the proof.

\subsection{Affinity}

Our proof of Theorem~\ref{thm:affine} is based on the manifest affinity of~\eqref{eq:quantum Fisher} as a function of $\Gamma_N$, and the quantum de Finetti theorem, a generalization of the classical de Finetti-Hewitt-Savage theorem, see~\cite{Rougerie-spartacus,Rougerie-LMU} for review. Readers familiar with the classical theorem could note that the version of the quantum theorem I use has a proof (see~\cite{HudMoo-75} and~\cite[Appendix~A]{LewNamRou-14}) essentially identical to that of Hewitt-Savage~\cite{HewSav-55}.

First observe that, as per Item 2 in Properties~\ref{pro:kinetic}, Definition~\ref{def:mean Fisher} and~\eqref{eq:deF measure} we immediately have 
that 
$$\cI_s [\mu] \leq \int_{\cP (\R^d)} \cI_s [\rho] dP (\rho).$$ 
We aim at a corresponding lower bound.

Define, for any $N\in\N$, a bosonic quantum wave-function and a bosonic quantum state as 
$$ \Psi_N = \sqrt{\mu ^{(N)}}, \quad \Gamma_N =|\Psi_N \rangle \langle \Psi_N|$$
where $\mu^{(N)}$ is the $N$-th marginal of $\mu\in\cP_{\rm sym} (\R^{dN})$. Let then, for $k\leq N$, $\Gamma_N^{(k)}$ be the reduced density matrix~\eqref{eq:red dens mat} of $\Gamma_N$. At fixed $k$, the sequence $(\Gamma_N^{(k)})_N$ is by definition bounded in the trace-class. Modulo a (not-relabeled) subsequence we thus have 
\begin{equation}\label{eq:weak star CV}
\Gamma_N ^{(k)} \wto_\star \gamma^{(k)}
\end{equation}
in the trace-class, as $N\to \infty$. The latter being ~\cite{Simon-79,Schatten-60} the dual of the compact operators (equiped with the operator norm, and where the duality bracket between two operators $A,B$ is given by $\tr ( A B )$), this means 
\begin{equation}\label{eq:weak star}
\tr\left( K_k \Gamma_N ^{(k)} \right) \to \tr\left( K_k \gamma ^{(k)} \right)
\end{equation}
for any compact operator $K_k$ on $L^2_{\rm sym} (\R^{dk})$. By a diagonal extraction argument we can assume that, for all $k\geq 0$ the sequences $(\Gamma_N^{(k)})_N$ converges weakly-$\star$, along a common subsequence in $N$. 

Let $V$ be the potential such that~\eqref{eq:trap} holds and 
$$ h^V = h + V.$$
Then
\begin{multline*}
\tr \left( \sum_{j=1} ^k h_{x_j} ^V \Gamma_N^{(k)}\right) = \frac{k}{N} \left\langle \sqrt{\mu^{(N)}} \big| \sum_{j=1} ^N h_{x_j} + V(x_j) \big| \sqrt{\mu ^{(N)}}   \right\rangle_{L^2} \\ \leq k \left(\cI_s [\mu] + \int_{\R^d} \mu^{(1)} V \right) < \infty
\end{multline*}
by assumption. Hence, for all $k\geq 0$ and any constant $c>0$
$$ \tr \left( \sum_{j=1} ^k (h+V+c)_{x_j} \Gamma_N^{(k)}\right) \leq C_{k,c}$$
independently of $N$. Using cyclity of the trace, the positive operator $ L_k ^{1/2} \Gamma_N^{(k)} L_k ^{1/2} $
with $L_k = \sum_{j=1} ^k (h+V+c)_{x_j}$ is bounded in the trace-class. As above this implies that (modulo a futher extraction of a subsequence) it converges weakly-$\star$ (in the sense of \eqref{eq:weak star})
\begin{equation}\label{eq:weak star CV bis}
L_k ^{1/2} \Gamma_N^{(k)} L_k ^{1/2} \wto_\star L_k ^{1/2} \gamma ^{(k)} L_k ^{1/2}
\end{equation}
where the limit is identified by testing the convergence with smooth finite-rank operators and recalling \eqref{eq:weak star CV}. As per Item 3 in Properties~\ref{pro:kinetic}, $h+V+c$ has compact inverse provided $c$ is chosen large enough. Consequently, so does $L_k$.  Then, by  \eqref{eq:weak star CV bis},
$$ \tr \Gamma_N^{(k)} = \tr \left( L_k ^{-1} L_k ^{1/2} \Gamma_N^{(k)} L_k ^{1/2} \right) \to \tr \left( L_k ^{-1} L_k ^{1/2} \gamma ^{(k)} L_k ^{1/2} \right) = \tr \left(  \gamma^{(k)}\right).$$
This proves convergence of the trace-class norm
$$ 1 = \tr \left( \Gamma_N^{(k)}\right) \to \tr \gamma^{(k)}$$
and hence that actually 
\begin{equation}\label{eq:strong CV}
 \Gamma_N ^{(k)} \to \gamma^{(k)} 
\end{equation}
strongly in the trace-class norm (see~\cite{dellAntonio-67} or ~\cite[Addendum~H]{Simon-79}). 

We now use the strong\footnote{"Strong" refers to the fact that we use strong trace-class convergence of reduced density matrices. This is the quantum analogue of the Hewitt-Savage theorem where one uses tightness of the marginals. There is also a "weak" quantum de Finetti theorem, relying only on weak-$\star$ convergence of reduced density matrices. We do not use it here.} quantum de Finetti theorem (see~\cite{HudMoo-75,LewNamRou-14} and~\cite{Rougerie-LMU,Rougerie-spartacus} for review)
to obtain the existence and uniqueness of a probability measure $Q$ over the unit sphere of $L^2 (\R^d)$ such that 
\begin{equation}\label{eq:quantum deF}
 \gamma^{(k)} = \int |u ^{\otimes k} \rangle \langle u^{\otimes k}|  dQ (u). 
\end{equation}
By~\eqref{eq:red DM} (with $n=1$)
$$
\frac{1}{N}\cI_s [\mu ^{(N)}] = \tr\left( h \Gamma_N ^{(1)} \right)
$$
and a lower semi-continuity argument gives 
$$ \liminf_{N\to \infty} \tr\left( h \Gamma_N ^{(1)} \right) \geq \tr \left( h \gamma^{(1)} \right)$$
so that, combining the two observations and~\eqref{eq:quantum deF},  
$$ \cI_s [\mu] \geq \int \langle u | h | u \rangle  dQ (u). $$
Recalling Item 1 of Properties~\ref{pro:kinetic} this yields 
$$ \cI_s [\mu] \geq  \int \langle |u| | h | |u|  \rangle  dQ (u) = \int \left\langle \sqrt{|u|^2} \,\big|\, h \, \big| \,\sqrt{|u|^2} \right\rangle  dQ (u)   $$
and the proof will be complete once we have proven the next lemma: 

\begin{lemma}[\textbf{Identification of de Finetti measures}]\mbox{}\\
Let $P\in\cP(\cP(\R^d))$ be the classical de Finetti measure defined in Theorem~\ref{thm:affine} and $Q\in \cP (L^2 (\R^d))$ the quantum de Finetti measure defined in~\eqref{eq:quantum deF}. We have, for any bounded continuous function $\Phi$ over $\cP(\R^d)$
$$ \int \Phi (\rho) dP (\rho) = \int \Phi(|u|^2) dQ (u).$$
\end{lemma}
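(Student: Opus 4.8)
The plan is to identify $P$ with the push-forward of $Q$ under the map $\Pi : u \mapsto |u|^2$, which sends a unit vector of $L^2(\R^d)$ to a probability density on $\R^d$ (indeed $\int_{\R^d} |u|^2 = \|u\|_{L^2}^2 = 1$). The map $\Pi$ is continuous from $L^2$ (strong) to $L^1$ (strong), hence Borel measurable into $\cP(\R^d)$, so the image measure $\Pi_{\#} Q \in \cP(\cP(\R^d))$ is well-defined and the asserted identity becomes the change-of-variables formula $\int \Phi(\rho)\, d(\Pi_{\#} Q)(\rho) = \int \Phi(|u|^2)\, dQ(u)$. Everything thus reduces to proving $P = \Pi_{\#} Q$, and I would do this by matching the two measures through their finite-dimensional marginals.

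First I would record that the diagonal of the reduced density matrix $\Gamma_N^{(n)}$ is exactly $\mu^{(n)}$, independently of $N$. Repeating verbatim the computation leading to \eqref{eq:n density}, now with $\mu_N = \mu^{(N)}$ and using the consistency condition $(\mu^{(N)})^{(n)} = \mu^{(n)}$, gives, for every bounded multiplication operator $V_n$ on $\R^{dn}$,
$$ \tr\left( V_n \, \Gamma_N^{(n)} \right) = \int_{\R^{dn}} V_n \, \mu^{(n)}, \qquad N \geq n. $$
Next I would pass to the limit $N \to \infty$ on the left-hand side. Since $\Gamma_N^{(n)} \to \gamma^{(n)}$ strongly in the trace-class norm by \eqref{eq:strong CV}, and $V_n$ is bounded, we get $\tr(V_n \Gamma_N^{(n)}) \to \tr(V_n \gamma^{(n)})$. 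Inserting the quantum de Finetti representation \eqref{eq:quantum deF} and using Fubini,
$$ \tr\left( V_n \, \gamma^{(n)} \right) = \int \langle u^{\otimes n} | V_n | u^{\otimes n}\rangle \, dQ(u) = \int_{\R^{dn}} V_n \left( \int \left(|u|^2\right)^{\otimes n} dQ(u) \right). $$
Comparing the two displays and letting $V_n$ range over all bounded functions yields, as measures on $\R^{dn}$ for every $n$,
$$ \mu^{(n)} = \int \left(|u|^2\right)^{\otimes n} dQ(u) = \int \rho^{\otimes n} \, d(\Pi_{\#} Q)(\rho), $$
the last equality again being the change of variables.

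Finally I would invoke the uniqueness part of the Hewitt-Savage theorem. Both $P$ and $\Pi_{\#} Q$ are probability measures on $\cP(\R^d)$ representing the same hierarchy $(\mu^{(n)})_n$, namely $\mu^{(n)} = \int \rho^{\otimes n} dP(\rho) = \int \rho^{\otimes n} d(\Pi_{\#} Q)(\rho)$ for all $n$; by the uniqueness of the de Finetti-Hewitt-Savage measure asserted in Theorem~\ref{thm:affine} they must coincide, $P = \Pi_{\#} Q$, which is exactly the claim.

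The one step requiring genuine care is the passage to the limit in the second paragraph. It is precisely here that one needs the \emph{strong} trace-class convergence \eqref{eq:strong CV} — obtained from the confinement hypothesis \eqref{eq:trap} via the compact-resolvent property — rather than the mere weak-$\star$ convergence \eqref{eq:weak star CV}: the diagonal (density) information encoded in $\gamma^{(n)}$ is only recovered by testing against multiplication operators $V_n$, which are bounded but not compact, so weak-$\star$ testing against compact operators alone would be insufficient. The remaining ingredients — the diagonal identity, the Fubini computation, and the moment-uniqueness — are routine once this convergence is available.
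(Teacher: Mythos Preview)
Your proof is correct and follows essentially the same route as the paper's: both compute $\tr(V_n \Gamma_N^{(n)}) = \int_{\R^{dn}} V_n \,\mu^{(n)}$, pass to the limit $N\to\infty$ via the strong trace-class convergence~\eqref{eq:strong CV}, and insert the quantum de Finetti representation to match $\mu^{(n)}$ with $\int (|u|^2)^{\otimes n}\,dQ(u)$. The only cosmetic difference is the closing step --- the paper tests against the monomials $M_{k,\phi_k}$ and invokes Stone--Weierstrass to reach all bounded continuous $\Phi$, whereas you package the same moment identity as $P=\Pi_{\#}Q$ via the uniqueness clause of Hewitt--Savage; these are equivalent, and your remark on why strong (not merely weak-$\star$) convergence is needed is exactly the point.
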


\begin{proof}
The monomial functions $\Phi$ of the form (see~\cite[Section~1.7]{Golse-13})
$$ \Phi (\rho) = M_{k,\phi_k} (\rho) = \int_{\R^{dk}} \phi_k \rho ^{\otimes k}$$
with $k\in \N$ and $\phi_k$ bounded continuous over $\R ^{dk}$ generate a subalgebra of the continuous functions on the space of probability measures. This subalgebra is dense by the Stone-Weierstrass theorem and it thus suffices to test the claim against all the above monomials.

We identify the function $\phi_k$ with the multiplication operator thereby to write
\begin{align*}
 \int M_{k,\phi_k} (\rho) dP (\rho) &= \int_{\R^{dk}} \phi_k \mu^{(k)}\\
 &= \int_{\R^{dN}} \phi_k (x_1,\ldots,x_k) \mu^{(N)} (x_1,\ldots,x_N) dx_1\ldots dx_N\\
 &= \tr \left( \phi_k \otimes \1 ^{\otimes (N-k)} \Gamma_N \right)\\
 &= \tr \left( \phi_k \Gamma_N ^{(k)} \right)\to \tr \left( \phi_k \gamma ^{(k)} \right)
\end{align*}
using Equations~\eqref{eq:deF measure}-\eqref{eq:marginals}-\eqref{eq:quantum}-\eqref{eq:red dens mat} and, in the last step, the fact that multiplication by a bounded function is a bounded operator to pass to the limit using~\eqref{eq:strong CV}. Since the left-hand side actually does not depend on $N$ we deduce   
$$ \int M_{k,\phi_k} (\rho) dP (\rho) = \tr \left( \phi_k \gamma ^{(k)} \right). $$
As per~\eqref{eq:quantum deF} this implies the desired 
$$ \int M_{k,\phi_k} (\rho) dP (\rho) = \int M_{k,\phi_k} (|u|^2) dQ (u).$$
This being true for all $k$ and $\phi_k$, the proofs of the lemma and the theorem (approximating $\rho \mapsto \left\langle \sqrt{\rho} \,\big|\, h \, \big| \,\sqrt{\rho} \right\rangle$ by a sequence of continuous functions) are both complete.
\end{proof}

%

\end{document}